\documentclass[11pt]{article}

\newcommand\version{June 16, 2026}

\usepackage{amsmath,amssymb,amsthm,mathtools}
\usepackage{url}

\newtheorem{theorem}{Theorem}
\newtheorem{lemma}[theorem]{Lemma}

\theoremstyle{remark}

\newtheorem{remark}[theorem]{Remark}

\newcommand{\E}{\mathcal E}
\newcommand{\loc}{{\rm loc}}
\newcommand{\norm}[1]{\left\|#1\right\|}

\newcommand{\R}{\mathbb R}
\newcommand{\C}{\mathbb C}
\newcommand{\eps}{\varepsilon}
\newcommand{\csch}{\operatorname{csch}}
\newcommand{\q}{\mathfrak q}

\title{The sharp extension norm for a planar sector}
\author{Rupert L. Frank\footnote{LMU Munich, r.frank@lmu.de} \footnote{Munich Center for Quantum Science and Technology} \footnote{Caltech}
\and Paata Ivanisvili\footnote{University of California, Irvine, pivanisv@uci.edu}}
\date{\version}

\begin{document}
\maketitle

\begin{abstract}
We compute explicitly the infimum of the norms of $W^{1,2}$-extension operators for planar sectors and exhibit an extension operator attaining this infimum. This solves an open problem posed by Maz'ya.
\end{abstract}


\section{Introduction and main result}

The notion of an extension operator is fundamental in the theory of Sobolev spaces \cite{AdamsFournier,MazyaSobolev,Leoni} and in its applications \cite{Evans,FrLaWe}. The problem of sharp constants for extension operators has, as far as we know, hardly received any attention so far. Perhaps this is the reason why Maz'ya included into his list of `Seventy Five (Thousand) Unsolved Problems in Analysis and Partial Differential Equations' \cite{Mazya2018} a question concerning the least norm of a $W^{1,2}$-extension operator for a planar sector; see Problem 3.6. The main result of this note is a complete solution of this problem.

For $0<\alpha<2\pi$, let
\[
A_\alpha:=\{z\in\C:\ 0<\arg z<\alpha\} \,.
\]
We equip Sobolev spaces with the norm
\begin{equation}\label{eq:W12norm}
\norm{u}_{W^{1,2}(\Omega)}^2
=
\int_\Omega \left(|\nabla u|^2+|u|^2\right)dx.
\end{equation}
A linear operator
\(E:W^{1,2}(A_\alpha)\to W^{1,2}(\R^2)\) 
is called an extension operator if
\[
Eu=u\quad\text{in }A_\alpha.
\]
Maz'ya's problem consists in determining
\begin{equation}
    \label{eq:nalpha}
    \mathcal N(\alpha):= \inf \left\{ \|E\|_{W^{1,2}(A_\alpha)\to W^{1,2}(\R^2)} :\ E \ \text{is an extension operator} \right\}.
\end{equation}
The following is our main result.

\begin{theorem}\label{main}
    For any $0<\alpha<2\pi$ one has
    \begin{equation}
        \label{eq:norm}
             \mathcal N(\alpha) = \frac{1}{\min\{\sqrt{\alpha/(2\pi)},\sqrt{1-\alpha/(2\pi)}\}} \,. 
    \end{equation}
    Moreover, the infimum in \eqref{eq:nalpha} is attained by the extension operator constructed in the proof below.
\end{theorem}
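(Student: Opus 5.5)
The plan is to reduce everything to the boundary traces on the two rays $\{\arg z=0\}$ and $\{\arg z=\alpha\}$, and to compare the energy needed to fill the sector $A_\alpha$ with the energy needed to fill the complementary sector $B_\alpha:=\R^2\setminus\overline{A_\alpha}$, which has opening $\beta:=2\pi-\alpha$. Since $\min\{\alpha,\beta\}/(2\pi)$ is the quantity appearing in \eqref{eq:norm}, the claim is equivalent to
\[
\mathcal N(\alpha)^2 = 1+\max\{\beta/\alpha,\ \alpha/\beta\}.
\]
Indeed, if $\alpha\le\pi$ then $1+\beta/\alpha=2\pi/\alpha$, and otherwise $1+\alpha/\beta=2\pi/\beta$. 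I would prove the upper bound by an explicit operator and the lower bound by test functions concentrated at small scales.

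\emph{Upper bound and the extremal operator.} Write $u$ in polar coordinates on $A_\alpha$. For $\alpha\le\theta\le 2\pi$ define
\[
(Eu)(r,\theta)=u\bigl(r,\varphi(\theta)\bigr), \qquad \varphi(\theta)=\alpha-\tfrac{\alpha}{\beta}(\theta-\alpha).
\]
This is an angular reflection-and-stretch that maps $[\alpha,2\pi]$ linearly onto $[0,\alpha]$, sending $\theta=\alpha$ to $\alpha$ and $\theta=2\pi$ to $0$. The traces then match on both rays, so $Eu\in W^{1,2}_{\loc}$ across them and $Eu\in W^{1,2}(\R^2)$. Changing variables $\theta\mapsto\varphi(\theta)$ in $\int_{B_\alpha}(|\partial_r v|^2+r^{-2}|\partial_\theta v|^2+|v|^2)\,r\,dr\,d\theta$ gives two kinds of terms:
\begin{itemize}
\item the terms $|\partial_r v|^2$ and $|v|^2$ pick up only the Jacobian factor $\beta/\alpha$;
\item the term $r^{-2}|\partial_\theta v|^2$ picks up $(\alpha/\beta)^2\cdot(\beta/\alpha)=\alpha/\beta$.
\end{itemize}
Hence $\|Eu\|_{W^{1,2}(B_\alpha)}^2\le\max\{\beta/\alpha,\alpha/\beta\}\,\|u\|^2_{W^{1,2}(A_\alpha)}$, and adding the contribution of $A_\alpha$ gives the upper bound. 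The same computation shows that this $E$ attains the infimum once the lower bound is proved.

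\emph{Lower bound.} For any extension operator $E$, $\|Eu\|^2\ge\|u\|^2_{A_\alpha}+\int_{B_\alpha}|\nabla (Eu)|^2\ge \|u\|_{A_\alpha}^2+D_{B}(g)$. Here $g$ is the trace of $u$ on the two rays, and $D_S(g)$ denotes the minimal Dirichlet energy in the sector $S$ with boundary values $g$. So it suffices to find $u$ with $D_B(g)/\|u\|^2_{W^{1,2}(A_\alpha)}$ arbitrarily close to $\max\{\beta/\alpha,\alpha/\beta\}$. For this I pass to the variables $t=\log r$, which map each sector conformally onto a strip: $A_\alpha$ onto a strip of width $\alpha$ and $B_\alpha$ onto one of width $\beta$. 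Dirichlet energy is conformally invariant, while the mass term becomes $\int|u|^2e^{2t}\,dt\,d\theta$.

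For boundary data $a(t)$, $b(t)$ on the two edges, the Fourier transform in $t$ gives the strip energy
\[
\int \xi\bigl[\coth(\xi w)(|\hat a|^2+|\hat b|^2)-2\csch(\xi w)\,\mathrm{Re}\,\hat a\bar{\hat b}\bigr]d\xi
\]
for width $w$. I then choose data concentrated at frequencies $\xi\to0$:
\begin{itemize}
\item with $a=-b$ the energy ratio $B/A$ is $\tanh(\xi\alpha/2)/\tanh(\xi\beta/2)\to\alpha/\beta$;
\item with $a=b$ the ratio $\tanh(\xi\beta/2)/\tanh(\xi\alpha/2)\to\beta/\alpha$ (here $\xi\ne0$ is essential, because constants have zero energy).
\end{itemize}
I take $u$ to be the harmonic extension into $A_\alpha$ of such data.

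\emph{Main obstacle.} The hard step is making these test functions genuinely $W^{1,2}$ while keeping the mass term negligible. The construction is as follows.
\begin{itemize}
\item Support the data $a,b$ on $t\in(-L-T,-L)$, i.e.\ at small radii, with frequencies of size $\sim 1/T$. Then $\int|u|^2e^{2t}$ is $O(e^{-2L})$ relative to the energy, and the frequency localization error is controlled as $T\to\infty$.
\item Cut off the harmonic extension, at a small additional energy cost.
\end{itemize}
The limits are taken in the order $L\to\infty$ first, then $\xi\to0$ (i.e.\ $T\to\infty$). It is in controlling these cutoffs against the factor $\xi$ in the energy, which degenerates as $\xi\to0$, that the real care is needed.
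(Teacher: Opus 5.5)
Your proposal is correct, but it takes a different route in both halves.

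\textbf{Upper bound.} The paper's extremal operator is not yours. It defines $E_0u$ on $B_\alpha$ as the minimizer of the full $W^{1,2}(B_\alpha)$-norm among functions with the reversed trace $Jf$. Then $\|E_0u\|^2_{W^{1,2}(B_\alpha)}=\q_\beta(f)\le C_{\alpha,\beta}\,\q_\alpha(f)\le C_{\alpha,\beta}\|u\|^2_{W^{1,2}(A_\alpha)}$. The comparison of trace energies is proved by exactly your angular stretch, but applied to competitors in the trace problem rather than to $u$ itself. Your reflection-and-stretch is therefore a second, fully explicit extremal operator (composition with a Lipschitz map). By contrast, $E_0$ is the pointwise best one: $\|E_0u\|_{W^{1,2}(B_\alpha)}\le\|Eu\|_{W^{1,2}(B_\alpha)}$ for every extension operator $E$. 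Both have norm squared $1+C_{\alpha,\beta}$.

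\textbf{Lower bound.} The paper computes $\q_L(g)$ exactly on $L^2$ traces via the spectral resolution of $H=-d^2/dt^2+e^{2t}$ (Lemma~\ref{lem:trace-formula}). It then takes $g$ in the spectral subspace of $H$ for $[0,\delta]$, which needs only $\inf\sigma(H)=0$ and that $0$ is not an eigenvalue. You instead discard the mass term on $B_\alpha$ and use only Plancherel for the pure Dirichlet problem on strips. You make the mass term on $A_\alpha$ negligible by moving slowly varying data to $t\to-\infty$, i.e.\ to small radii. This is the physical-space counterpart of the paper's low spectral subspace. It avoids the direct-integral spectral theorem and is more elementary. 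In exchange, the paper's route gives an exact formula for the trace energies and applies verbatim to the general radial weights in its final remark.

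\textbf{The step you flag as the main obstacle.} It is easier than you fear.
\begin{itemize}
\item By translation invariance in $t$, the Dirichlet energies of the harmonic extensions do not depend on $L$. The mass term on $A_\alpha$ equals exactly $e^{-2L}$ times its value at $L=0$.
\item That value is finite without any cutoff if you use $a=b$ only for $\alpha<\pi$ and $a=-b$ for $\alpha\ge\pi$. The harmonic extension then decays at least like $r^{-\pi/\alpha}$, resp.\ $r^{-2\pi/\alpha}$, at infinity, so it lies in $W^{1,2}(A_\alpha)$. For $a=b$ and $\alpha\ge\pi$ it is in general not in $L^2(A_\alpha)$; only there would a cutoff be needed.
\item The constant depends on $T$ and on the profile $\eta$, so taking $L\to\infty$ first, as you propose, is the safe order.
\item For data $\eta((t+L)/T)$, the limit $T\to\infty$ is then dominated convergence for a weighted average of bounded ratios.
\item To bound $\int_{B_\alpha}|\nabla(Eu)|^2$ from below by the Fourier formula for an arbitrary $E$, you need $Eu$, in logarithmic coordinates, to be in $L^2$ of the strip. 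This holds because its trace is in $L^2(\R)$ and its $\theta$-derivative is square integrable, exactly as in Step~2 of the proof of Lemma~\ref{lem:trace-formula}. So Plancherel applies.
\end{itemize}
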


\begin{remark}
Maz'ya \cite{Mazya2018} states that the right side of \eqref{eq:norm} is the least norm of an extension operator $L^{1,2}(A_\alpha)\to L^{1,2}(\R^2)$, where $L^{1,2}(\Omega)$ is the homogeneous Sobolev space consisting of $L^1_\loc(\Omega)$ functions whose distributional gradient belongs to $L^2(\Omega)$. As no proof of this statement is provided, except for the expression `Using the Radon transform', we are not sure whether in this statement $L^{1,2}(\Omega)$ is endowed with the norm in \cite[Section 2]{Mazya2018} or with the seminorm given by the $L^2$-norm of the gradient. A minor modification of our proof of Theorem \ref{main} provides a proof (without using the Radon transform) of the assertion in \cite{Mazya2018} in case the latter norm is used.

We also note that in the above we have corrected a typographical error in \cite{Mazya2018} in the expression for the least $L^{1,2}$-norm. (The constant given there is the bound for the norm from $L^{1,2}(A_\alpha)\to L^{1,2}(\R^2\setminus\overline{A_\alpha})$.)
\end{remark}


\section{The trace energy}\label{sec:proof}

In this section we consider a trace energy on sectors $A_L$ with $0<L<2\pi$. We will apply these results later both with $L=\alpha$ and $L=2\pi -\alpha$.

By the Sobolev trace theorem, there is an operator $\gamma_L:W^{1,2}(A_L)\to L^2_\loc(\partial A_L)$ such that $\gamma_L u = u|_{\partial A_L}$ for $u\in W^{1,2}(A_L)\cap C(\overline A_L)$. Let $\mathcal Q_L$ denote the range of $\gamma_L$, that is, the set of all functions $f$ on $\partial A_L$ such that there is a $u\in W^{1,2}(A_L)$ with $\gamma_Lu=f$. We are interested in the trace energy
\begin{equation}\label{eq:qLdef}
 \q_L(f)
:=
\inf\left\{
\| u \|_{W^{1,2}(A_L)}^2 :\ \gamma_L u=f
\right\}, 
\end{equation}
defined for $f\in\mathcal Q_L$. Let us summarize some basic information.

\begin{lemma}\label{existence}
    Let $0<L<2\pi$. For every $f\in\mathcal Q_L$ the infimum \eqref{eq:qLdef} is attained at a unique element $u_f\in W^{1,2}(A_L)$. Moreover, $u_f$ is a weak solution of
    $$
    \begin{cases}
        (-\Delta + 1)u_f = 0 & \text{in}\ A_L \,,\\
        u_f = f & \text{on}\ \partial A_L \,.
    \end{cases}
    $$
\end{lemma}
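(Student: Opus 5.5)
The plan is to use the direct method in the Hilbert space $W^{1,2}(A_L)$, exploiting that the admissible set is a closed affine subspace. Fix $f\in\mathcal Q_L$ and set
\[
K_f:=\{u\in W^{1,2}(A_L):\ \gamma_L u=f\}.
\]
By the definition of $\mathcal Q_L$, the set $K_f$ is nonempty. It is affine, since $K_f=u_0+\ker\gamma_L$ for any fixed $u_0\in K_f$. Moreover, $\ker\gamma_L$ is a closed subspace of $W^{1,2}(A_L)$. Indeed, $\gamma_L$ is continuous from $W^{1,2}(A_L)$ into $L^2_\loc(\partial A_L)$: by the Sobolev trace theorem on bounded Lipschitz pieces $A_L\cap B_R$, we have $\|\gamma_L u\|_{L^2(\partial A_L\cap B_R)}\le C_R\|u\|_{W^{1,2}(A_L)}$ for every $R$. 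Hence $K_f$ is a nonempty closed convex set. Then $\q_L(f)$ is the squared distance from $0$ to $K_f$ in the Hilbert norm \eqref{eq:W12norm}. By the projection theorem in Hilbert spaces, there is a unique element of minimal norm, namely $u_f=u_0-P u_0$, where $P$ is the orthogonal projection onto $\ker\gamma_L$. Uniqueness can also be obtained from the strict convexity of $\|\cdot\|^2$: if $u,v$ were two distinct minimizers, then $(u+v)/2\in K_f$ would have strictly smaller norm by the parallelogram law.

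The minimizer is characterized by the orthogonality condition $\langle u_f,\varphi\rangle_{W^{1,2}(A_L)}=0$ for all $\varphi\in\ker\gamma_L$. This is the first variation of $t\mapsto\|u_f+t\varphi\|^2$ at $t=0$, applied to real and imaginary parts if functions are complex valued. Since $C_c^\infty(A_L)\subset\ker\gamma_L$, we obtain
\[
\int_{A_L}\left(\nabla u_f\cdot\overline{\nabla\varphi}+u_f\overline\varphi\right)dx=0\qquad\text{for all }\varphi\in C_c^\infty(A_L).
\]
This is precisely the weak form of $(-\Delta+1)u_f=0$ in $A_L$. The boundary condition $u_f=f$ on $\partial A_L$ holds in the trace sense, because $u_f\in K_f$.

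The only point needing care is the closedness of $\ker\gamma_L$, since $A_L$ is unbounded and has a corner at the origin. The continuity of the trace into $L^2_\loc(\partial A_L)$ from the quoted trace theorem suffices here. Concretely, we apply it on the bounded Lipschitz domains $A_L\cap B_R$; for $0<L<2\pi$ the corner is non-degenerate, so these domains are Lipschitz. If $u_n\to u$ in $W^{1,2}(A_L)$ and $\gamma_L u_n=0$, then $\gamma_L u=0$ on each $\partial A_L\cap B_R$, hence on all of $\partial A_L$. Everything else is standard Hilbert space theory.
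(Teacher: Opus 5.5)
Your proposal is correct and is essentially the paper's argument: both minimize the Hilbert norm over the translate $u_0+\ker\gamma_L$, obtain uniqueness from strict convexity, and read off $(-\Delta+1)u_f=0$ as the Euler--Lagrange (orthogonality) condition. The only difference is how you get closedness: the paper identifies $\ker\gamma_L$ with $W^{1,2}_0(A_L)$ for the Lipschitz domain $A_L$, whereas you derive closedness directly from continuity of the trace into $L^2_\loc(\partial A_L)$ on the bounded Lipschitz pieces $A_L\cap B_R$. That is enough, because for the equation you only need $C_c^\infty(A_L)\subset\ker\gamma_L$.
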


We include the simple proof for the sake of completeness.

\begin{proof}
    By assumption, there is a $u_*\in W^{1,2}(A_L)$ such that $\gamma_Lu_* = f$. Since $A_L$ is a Lipschitz domain, the kernel of the trace operator is $W^{1,2}_0(A_L)$. Hence
    \begin{equation}
        \label{eq:qLdefalt}
            \q_L(f) = \inf\{ \| v + u_* \|_{W^{1,2}(A_L)}^2 :\ v\in W^{1,2}_0(A_L)\}
    \end{equation}
    and $u$ is a minimizer for \eqref{eq:qLdef} if and only if $u=v+u_*$ where $v$ is a minimizer for \eqref{eq:qLdefalt}. The functional $v\mapsto \| v + u_* \|_{W^{1,2}(A_L)}^2$ is convex, coercive and weakly lower semicontinuous, so the existence of a minimizer follows by standard arguments. The uniqueness of the minimizer follows from strict convexity. Finally, the equation in the lemma arises as the Euler equation of this minimization problem.
\end{proof}

To proceed, we will slightly change our point of view. We will consider $\mathcal Q_L$ not as a subspace of $L^2_\loc(\partial A_L)$, but rather as a subset of $L^2_\loc([0,\infty),\C^2)$ by identifying a function $f \in L^2_\loc(\partial A_L)$ with a pair $(f_0,f_1)$ of functions on $[0,\infty)$, where $f_0$ and $f_1$ are the parts of $f$ on the rays $\{ \arg z = 0\}$ and $\{ \arg z = L\}$, respectively.

The following sharp comparison between two trace energies is the key ingredient in our proof of Theorem \ref{main}.

\begin{lemma}[Sharp comparison]\label{lem:comparison}
Let \(0<\alpha,\beta<2\pi\). Then $\mathcal Q_\alpha = \mathcal Q_\beta$ and for all $f$ from this set,
\begin{equation}\label{eq:comparison}
\q_\beta(f)
\le
\max\left\{\frac{\alpha}{\beta},\frac{\beta}{\alpha}\right\}\q_\alpha(f) \,.
\end{equation}
Moreover,
\begin{equation}\label{eq:comparison-sharp}
\sup_{f\ne0}\frac{\q_\beta(f)}{\q_\alpha(f)}
=
\max\left\{\frac{\alpha}{\beta},\frac{\beta}{\alpha}\right\}.
\end{equation}
\end{lemma}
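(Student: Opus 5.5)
Our plan is to transfer competitors between the two sectors by stretching the angle in polar coordinates. Given $u\in W^{1,2}(A_\alpha)$, we define $Tu\in W^{1,2}(A_\beta)$ in polar coordinates $(r,\theta)$ by
\[
(Tu)(r,\theta):=u\bigl(r,\tfrac{\alpha}{\beta}\theta\bigr),\qquad 0<\theta<\beta .
\]
This map sends the ray $\theta=0$ to itself and the ray $\theta=\beta$ to $\theta=\alpha$, fixing the radial variable. Hence the trace of $Tu$, viewed as a pair $(f_0,f_1)$ on $[0,\infty)$, coincides with that of $u$. Since $T$ is a bi-Lipschitz change of variables away from the origin (and $W^{1,2}$ is preserved, because the origin has zero capacity), $T$ is a bijection $W^{1,2}(A_\alpha)\to W^{1,2}(A_\beta)$ with inverse of the same form. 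This gives $\mathcal Q_\alpha=\mathcal Q_\beta$.

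For the bound, we compute with $\lambda=\alpha/\beta$ and $dx=r\,dr\,d\theta$. Substituting $\phi=\lambda\theta$ gives
\[
\int_{A_\beta}|Tu|^2dx=\lambda^{-1}\int_{A_\alpha}|u|^2dx,\qquad
\int_{A_\beta}|\partial_r Tu|^2dx=\lambda^{-1}\int_{A_\alpha}|\partial_r u|^2dx,
\]
\[
\int_{A_\beta}r^{-2}|\partial_\theta Tu|^2dx=\lambda\int_{A_\alpha}r^{-2}|\partial_\phi u|^2dx .
\]
Hence $\|Tu\|^2_{W^{1,2}(A_\beta)}\le\max\{\lambda,\lambda^{-1}\}\|u\|^2_{W^{1,2}(A_\alpha)}$. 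Taking $u=u_f$ from Lemma \ref{existence} yields \eqref{eq:comparison}.

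Sharpness is the main obstacle: we need $f$ for which no competitor on $A_\beta$ does better than the transported one. We construct two families, depending on which term carries the energy.

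\emph{Case $\alpha/\beta>1$ (angular term).} We concentrate the energy in the angular derivative. Take $f_0=0$ and $f_1=g$, where $g$ is a smooth bump supported in $r\in[R,R+\sqrt R]$ with $R\to\infty$. At scale $R$ the sector near this region looks like a wedge in which the minimizer is essentially $g(r)\,\theta/L$. The leading energy is then
\[
\int\!\!\int \frac{|g(r)|^2}{L^2r^2}\,r\,dr\,d\theta\approx \frac{1}{L}\int \frac{|g|^2}{r}\,dr ,
\]
while the radial and $L^2$ contributions are of lower order. This gives $\q_\beta/\q_\alpha\to\alpha/\beta$. To make this rigorous we would bound $\q_L$ from below via the one-dimensional inequality
\[
\int_0^L|\partial_\theta v|^2\,d\theta\ge \frac{|v(L)-v(0)|^2}{L}
\]
on each circle, and from above by the explicit trial function.

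\emph{Case $\beta/\alpha>1$ (radial and mass terms).} Here we take $f_0=f_1=h$, with $h$ a slowly varying function of $r$ (for instance spread over a large region at scale $R$). A trial function constant in $\theta$ has energy
\[
L\int\bigl(|h'|^2+|h|^2\bigr)\,r\,dr .
\]
For the matching lower bound we use that, when $f_0=f_1=h$, averaging over $\theta$ does not increase the $W^{1,2}$ energy. Indeed, the average $\bar u(r)$ has the same boundary values and, by Jensen, smaller radial and mass terms and zero angular term. Therefore $\q_L(f)=L\int(|h'|^2+|h|^2)\,r\,dr$ exactly, and $\q_\beta/\q_\alpha=\beta/\alpha$. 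This equality case is exact, so only the first family requires an asymptotic argument, and that argument is the delicate step.
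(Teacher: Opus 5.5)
Your comparison step (stretching $\theta\mapsto\frac{\alpha}{\beta}\theta$, which gives $\mathcal Q_\alpha=\mathcal Q_\beta$ and \eqref{eq:comparison}) is correct. It is the paper's argument, written in polar rather than logarithmic coordinates. The sharpness part, however, fails in both cases.

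\textbf{Case $\alpha>\beta$.} Your data sit in the wrong regime. If $g$ has height one on $[R,R+\sqrt R]$ with $R\to\infty$, the trial function $g(r)\theta/L$ has
\begin{itemize}
\item angular energy $L^{-1}\int|g|^2r^{-1}\,dr\sim R^{-1/2}$,
\item radial energy $\frac L3\int|g'|^2r\,dr\sim R^{1/2}$,
\item mass term $\frac L3\int|g|^2r\,dr\sim R^{3/2}$.
\end{itemize}
So the terms you call lower order dominate, by factors $R$ and $R^2$. Your lower bound $\q_\beta(f)\ge\beta^{-1}\int|g|^2r^{-1}\,dr$ is then useless against this upper bound. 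In fact, far from the vertex the minimizer decays on the unit scale away from the ray carrying the data and does not feel the opening angle. In the language of Lemma~\ref{lem:trace-formula}, the spectral measure of $g(e^t)$ escapes to $\lambda\to\infty$, where $d^\pm_\beta/d^\pm_\alpha\to1$. Hence $\q_\beta(f)/\q_\alpha(f)\to1$ for your family.

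The angular term dominates only near the vertex, and only for data varying slowly in $\log r$. Take $t=\log r$ and $g=\eta(t/N)$ with $\eta\in C_c^\infty((-2,-1))$ and $N\to\infty$. The trial $g\,\theta/\alpha$ then has angular energy $N\|\eta\|^2/\alpha$, while its radial plus mass energy is $O(1/N)$. With this choice your one-dimensional lower bound does give $\q_\beta/\q_\alpha\ge\alpha/\beta-o(1)$. That would be an elementary substitute for the paper's spectral argument in this case.

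\textbf{Case $\beta>\alpha$.} The averaging step is false. The function $\bar u(r)=L^{-1}\int_0^Lu(r,\theta)\,d\theta$ is an interior average. As a $\theta$-independent function its trace on both rays is $\bar u$, not $h$, so it is not an admissible competitor.

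Consequently the identity $\q_L(f)=L\int(|h'|^2+|h|^2)\,r\,dr$ fails for every $h\ne0$. It would make $h(|x|)$ the minimizer, hence by Lemma~\ref{existence} a solution of $(-\Delta+1)u=0$. That means $h$ is a combination of $I_0$ and $K_0$, and no nonzero such combination has finite energy on the sector. Indeed, by \eqref{eq:qLformula} the ratio for symmetric data is a weighted average of $\tanh(\tau\beta/2)/\tanh(\tau\alpha/2)<\beta/\alpha$. So the supremum is never attained and sharpness is genuinely asymptotic. It again requires data near the vertex, slowly varying in $\log r$; at scale $R\to\infty$ the ratio tends to $1$.

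Your constant-in-$\theta$ trial correctly gives the upper bound $\widetilde\q_\alpha(g)\le\alpha\,\mathfrak h[h]$. You still need a matching lower bound for $\widetilde\q_\beta$. Besides the paper's spectral route, one option is the following:
\begin{enumerate}
\item Write a competitor as $h+v$ with $v$ vanishing on both lines.
\item Bound the cross term $2\operatorname{Re}\int_{S_\beta}(Hh)\bar v$ using the Dirichlet Poincar\'e inequality $\int|v_\theta|^2\ge(\pi/\beta)^2\int|v|^2$.
\item This yields $\widetilde\q_\beta(g)\ge\beta\,\mathfrak h[h]-\beta^3\pi^{-2}\|Hh\|^2$.
\end{enumerate}
For $h=\eta(t/N)$ the error term is $O(N^{-3})$, while $\mathfrak h[h]\approx N^{-1}\|\eta'\|^2$, so the ratio tends to $\beta/\alpha$.
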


We defer the proof of Lemma \ref{lem:comparison} to the next section and finish the current section by showing how it implies our main result.

\begin{proof}[Proof of Theorem \ref{main}]
    Introducing $\beta:=2\pi-\alpha$ and 
    \begin{equation}
    \label{eq:calphabeta}
    C_{\alpha,\beta}
    :=
    \max\left\{\frac{\alpha}{\beta},\frac{\beta}{\alpha}\right\},
    \end{equation}
    we can write the claimed identity \eqref{eq:norm} as
\begin{equation}
    \label{eq:norm1}
    \mathcal N(\alpha)^2 = 1+ C_{\alpha,\beta} \,.
\end{equation}

\medskip

\emph{Step 1.} In this step we prove the inequality $\leq$ in \eqref{eq:norm1} by constructing an optimal extension.

Let \(J\) denote the reversal of the two boundary components,
\[
J(f_0,f_1)=(f_1,f_0).
\]
The reversal appears because the complementary sector \(B_\alpha:=\R^2\setminus\overline{A_\alpha}\) is naturally
parametrized by \(\theta\in(\alpha,2\pi)\), so its first boundary ray is the ray
\(\theta=\alpha\) and its second boundary ray is the ray \(\theta=2\pi\), which
is the same as \(\theta=0\). Note that the trace energies are invariant under reversal:
\begin{equation}\label{eq:reversal}
\q_L(Jf)=\q_L(f).
\end{equation}
Indeed, this follows by the reflection \(\theta\mapsto L-\theta\), which preserves the \(W^{1,2}\)-norm on \(A_L\).

Let \(u\in W^{1,2}(A_\alpha)\), and let
\[
f :=\gamma_\alpha u
\]
be its trace on the two rays. By Lemma \ref{existence}, there is a unique energy minimizer in $W^{1,2}(B_\alpha)$, denoted by \(T_0u\), on \(B_\alpha\) with trace \(Jf\). The map \(T_0\) is linear, since it is the solution operator for a linear boundary-value problem.

By definition of \(\q_\beta\) and by \eqref{eq:reversal},
\[
\norm{T_0u}_{W^{1,2}(B_\alpha)}^2
=
\q_\beta(Jf)
=
\q_\beta(f).
\]
Using Lemma \ref{lem:comparison},
\[
\q_\beta(f)
\le
C_{\alpha,\beta} \,\q_\alpha(f) \,.
\]
But \(\q_\alpha(f)\) is the least energy among all functions in \(A_\alpha\)
with trace \(f\), while \(u\) is one such function. Therefore
\[
\q_\alpha(f)
\le
\norm{u}_{W^{1,2}(A_\alpha)}^2.
\]
We obtain
\begin{equation}
    \label{eq:upper}
    \norm{T_0u}_{W^{1,2}(B_\alpha)}^2
\le
C_{\alpha,\beta}\norm{u}_{W^{1,2}(A_\alpha)}^2.
\end{equation}
For $u\in W^{1,2}(A_\alpha)$ let
\[
E_0u :=
\begin{cases}
u & \text{in}\ A_\alpha,\\
T_0u & \text{in}\ B_\alpha.
\end{cases}
\]
From the fact that the trace of $T_0 u$ on $\partial B_\alpha$ coincides with the trace of $u$ on $\partial A_\alpha$, we deduce that $E_0 u$ is weakly differentiable in $\R^2$. Adding $\norm{u}_{W^{1,2}(A_\alpha)}^2$ to both sides of \eqref{eq:upper}, we see that the operator $E_0$ satisfies
$$
\| E_0 \|_{W^{1,2}(A_\alpha)\to W^{1,2}(\R^2)}^2 \leq 1+ C_{\alpha,\beta} \,.
$$
This proves the inequality $\leq$ in \eqref{eq:norm1}.

\medskip

\emph{Step 2.} We now prove the inequality $\geq$ in \eqref{eq:norm1}.

Let \(E:W^{1,2}(A_\alpha)\to W^{1,2}(\R^2)\) be any extension operator and let $0\ne f\in \mathcal Q_\alpha$. For any $u\in W^{1,2}(A_\alpha)$ with trace $f$, the function $Eu|_{B_\alpha}\in W^{1,2}(B_\alpha)$ has trace $Jf$ and therefore, recalling \eqref{eq:reversal},
\[
\norm{Eu}_{W^{1,2}(B_\alpha)}^2
\ge
\q_\beta(Jf)
=
\q_\beta(f).
\]
In particular, let us choose $u$ as the energy minimizer $u_f\in W^{1,2}(A_\alpha)$ in Lemma \ref{existence} with trace $f$. Then
\[
\norm{u_f}_{W^{1,2}(A_\alpha)}^2=\q_\alpha(f).
\]
Consequently,
\[
\frac{\norm{Eu_f}_{W^{1,2}(B_\alpha)}^2}
     {\norm{u_f}_{W^{1,2}(A_\alpha)}^2}
\ge
\frac{\q_\beta(f)}{\q_\alpha(f)} \,.
\]
Since \(Eu_f=u_f\) in \(A_\alpha\), we deduce that
\[
\| E \|_{W^{1,2}(A_\alpha)\to W^{1,2}(\R^2)}^2
\ge
\frac{\norm{Eu_f}_{W^{1,2}(\R^2)}^2}
     {\norm{u_f}_{W^{1,2}(A_\alpha)}^2}
=
1+
\frac{\norm{Eu_f}_{W^{1,2}(B_\alpha)}^2}
     {\norm{u_f}_{W^{1,2}(A_\alpha)}^2}
\ge
1+\frac{\q_\beta(f)}{\q_\alpha(f)} \,.
\]
Since \(0\ne f\in \mathcal Q_\alpha\) is arbitrary, the sharpness in Lemma \ref{lem:comparison} implies that
\[
\| E \|_{W^{1,2}(A_\alpha)\to W^{1,2}(\R^2)}^2 \geq 1 + C_{\alpha,\beta} \,.
\]
Applying this lower bound to \(E=E_0\), we also get \(\|E_0\|^2\ge1+C_{\alpha,\beta}\). Hence \(\|E_0\|^2=1+C_{\alpha,\beta}\), and the infimum is attained by \(E_0\).
This proves the inequality $\geq$ in \eqref{eq:norm1} and completes the proof of Theorem~\ref{main}.
\end{proof}


\section{Logarithmic coordinates and trace energies}

It remains to prove Lemma \ref{lem:comparison}, which is the objective of the present section. We find it convenient to work in logarithmic polar coordinates.

For \(0<L<2\pi\), let
\[
A_L :=\{re^{i\theta}:r>0,\ 0<\theta<L\}
\]
and
\[
S_L :=\R\times(0,L).
\]
Given a function \(u\) on \(A_L\), define
\begin{equation}
    \label{eq:wu}
    w(t,\theta)=u(e^t,\theta).
\end{equation}
Since
\[
dx=r\,dr\,d\theta=e^{2t}\,dt\,d\theta
\]
and
\[
|\nabla u|^2
=
|u_r|^2+\frac1{r^2}|u_\theta|^2
=
e^{-2t}\left(|w_t|^2+|w_\theta|^2\right),
\]
we obtain the identity
\begin{equation}\label{eq:polar-isometry}
\int_{A_L}\left(|\nabla u|^2+|u|^2\right)dx
=
\int_{S_L}
\left(|w_t|^2+|w_\theta|^2+e^{2t}|w|^2\right) dt\,d\theta =: \E_L(w) \,.
\end{equation}

From now on we assume only $L>0$ and note that the set $S_L$ and the quadratic form $\mathcal E_L$ are well defined for every such $L$. Let $\mathcal H_L$ denote the set of functions $w\in L^1_\loc(S_L)$ that are weakly differentiable and satisfy $\E_L(w)<\infty$. For $0<L<2\pi$, this is precisely the image of $W^{1,2}(A_L)$ under the logarithmic change of variables \eqref{eq:wu}. For arbitrary $L>0$, we use the strip definition.

For $w\in\mathcal H_L$ let 
\[
\Gamma_L w := (w(\cdot,0),w(\cdot,L))
\] 
denote the trace of \(w\) on the two boundary lines of \(S_L\). The traces are understood locally in $t$. Indeed, on every finite rectangle $(-R,R)\times(0,L)$, the condition $e^t w\in L^2$ implies $w\in L^2$; together with $w_t,w_\theta\in L^2$, this gives $w\in W^{1,2}((-R,R)\times(0,L))$. The usual trace theorem on rectangles therefore gives $w(\cdot,0),w(\cdot,L)\in L^2_\loc(\R)$.

Let $\widetilde{\mathcal Q}_L$ denote the range of $\Gamma_L$, that is, the set of pairs of functions \(g=(g_0,g_1)\) such that there is a $w\in\mathcal H_L$ with $\Gamma_Lw = g$. For $g\in\widetilde{\mathcal Q}_L$, define
\begin{equation}
\label{eq:defqtilde}
\widetilde\q_L(g)
:=
\inf\left\{
\E_L(w):\ \Gamma_Lw=g
\right\}.
\end{equation}
Clearly, if $0<L<2\pi$, then
\begin{equation}
    \label{eq:traceenergiesequiv}
    \widetilde\q_L(g) = \q_L(f)
    \qquad\text{if}\ g(t) = f(e^t) \,.
\end{equation}

Thus, Lemma \ref{lem:comparison} will follow from a corresponding comparison principle for the forms $\widetilde\q_\alpha$ and $\widetilde\q_\beta$. The comparison inequality itself is elementary, while the sharpness statement will use a semi-explicit formula for \(\widetilde\q_L(g)\) for \(L^2\)-traces. To derive this formula, we need the following simple lemma concerning the elementary minimization problem, defined for $L>0$, $\lambda\ge 0$ and \(a,b\in\C\),
\[
m_L^\lambda(a,b)
:=
\inf\left\{
\int_0^L\left(|\psi'|^2+\lambda|\psi|^2\right)d\theta:
\psi(0)=a,\ \psi(L)=b
\right\}.
\]

\begin{lemma}[Angular minimization]\label{lem:angular}
Let \(L>0\), \(\lambda\ge0\) and \(a,b\in\C\). Then
\begin{equation}\label{eq:mLdiag}
m_L^\lambda(a,b)
=
d_L^+(\lambda)\left|\frac{a+b}{\sqrt2}\right|^2
+
d_L^-(\lambda)\left|\frac{a-b}{\sqrt2}\right|^2,
\end{equation}
where
\begin{equation}\label{eq:dplusminus}
d_L^+(\lambda)
:=\sqrt\lambda\,\tanh\left(\frac{L\sqrt\lambda}{2}\right),
\qquad
d_L^-(\lambda)
:=\sqrt\lambda\,\coth\left(\frac{L\sqrt\lambda}{2}\right).
\end{equation}
At \(\lambda=0\), these formulas are interpreted as
\begin{equation}\label{eq:zero-d}
d_L^+(0)=0,
\qquad
d_L^-(0)=\frac{2}{L}.
\end{equation}
\end{lemma}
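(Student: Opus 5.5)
We plan to solve the one-dimensional variational problem directly. The functional $\psi\mapsto\int_0^L(|\psi'|^2+\lambda|\psi|^2)\,d\theta$ is strictly convex and coercive on the affine space $\{\psi\in W^{1,2}(0,L):\psi(0)=a,\ \psi(L)=b\}$. Hence it has a unique minimizer. This minimizer is characterized by the Euler--Lagrange equation $-\psi''+\lambda\psi=0$ on $(0,L)$ together with the boundary conditions.

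Integrating by parts against the minimizer gives
\[
\int_0^L\left(|\psi'|^2+\lambda|\psi|^2\right)d\theta=\operatorname{Re}\left(\overline{\psi(L)}\psi'(L)-\overline{\psi(0)}\psi'(0)\right).
\]
So it suffices to compute boundary derivatives. This step is where the computation actually happens.

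To diagonalize, we use linearity and the reflection symmetry $\theta\mapsto L-\theta$. We write $\psi=\psi_++\psi_-$, where $\psi_\pm$ solves the equation with boundary data $\psi_\pm(0)=c_\pm$ and $\psi_\pm(L)=\pm c_\pm$, with $c_\pm=(a\pm b)/2$. The symmetric part is
\[
\psi_+=c_+\frac{\cosh(\sqrt\lambda(\theta-L/2))}{\cosh(\sqrt\lambda L/2)},
\]
and the antisymmetric part is
\[
\psi_-=-c_-\frac{\sinh(\sqrt\lambda(\theta-L/2))}{\sinh(\sqrt\lambda L/2)}.
\]

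The cross term vanishes. Indeed, the bilinear form $\int(\psi_+'\overline{\psi_-'}+\lambda\psi_+\overline{\psi_-})$ equals a boundary term $\overline{\psi_-}\psi_+'\big|_0^L$. Here $\psi_+'$ is odd about $L/2$ and $\psi_-$ is odd, so their product is even, and the boundary term cancels. Alternatively, the integrand is odd about $L/2$, so its integral vanishes.

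The energy of $\psi_+$ is $2|c_+|^2\sqrt\lambda\tanh(L\sqrt\lambda/2)$. The energy of $\psi_-$ is $2|c_-|^2\sqrt\lambda\coth(L\sqrt\lambda/2)$. Since $2|c_\pm|^2=|(a\pm b)/\sqrt2|^2$, this gives \eqref{eq:mLdiag}.

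For $\lambda=0$ the minimizer is affine: $\psi(\theta)=a+(b-a)\theta/L$. Its energy is $|b-a|^2/L=\frac{2}{L}\left|\frac{a-b}{\sqrt2}\right|^2$. This matches \eqref{eq:zero-d}, which is also the limit of the $\lambda>0$ formulas as $\lambda\to0$.

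The only delicate points are bookkeeping. We must check the boundary-term signs for complex-valued $\psi$ and confirm that the cross term vanishes. No real obstacle is expected.
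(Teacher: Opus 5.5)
Your proof is correct and follows essentially the paper's route. Both arguments establish existence of a minimizer, pass to the Euler--Lagrange equation $-\psi''+\lambda\psi=0$, solve it explicitly with hyperbolic functions, evaluate the energy as a boundary term via integration by parts, and treat $\lambda=0$ with the affine minimizer. The only difference is that you split $\psi$ into even and odd parts about $L/2$ before computing, with the cross term vanishing by parity, whereas the paper first computes the full Hermitian form $\tau\coth(\tau L)(|a|^2+|b|^2)-2\tau\csch(\tau L)\operatorname{Re}(a\overline{b})$ and then diagonalizes it.
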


\begin{proof}
A simple compactness argument shows that there is a minimizer $\psi$ for $m_L^\lambda(a,b)$, and this minimizer solves
\[
-\psi''+\lambda\psi=0,
\qquad
\psi(0)=a,
\quad
\psi(L)=b.
\]
For \(\tau=\sqrt\lambda>0\), the unique solution of this equation is
\[
\psi(\theta)
=
\frac{a\sinh(\tau(L-\theta))+b\sinh(\tau\theta)}
     {\sinh(\tau L)}.
\]
Integration by parts and the differential equation give
\[
\int_0^L(|\psi'|^2+\lambda|\psi|^2)\,d\theta
=
\psi(L)\overline{\psi'(L)}-\psi(0)\overline{\psi'(0)},
\]
which yields 
\begin{equation}\label{eq:mLmatrix}
m_L^\lambda(a,b)
=
\tau\coth(\tau L)(|a|^2+|b|^2)
-2\tau\csch(\tau L)\operatorname{Re}(a\overline b).
\end{equation}
Diagonalizing the resulting \(2\times2\)
Hermitian form in the symmetric and antisymmetric directions \((1,1)\) and
\((1,-1)\) gives \eqref{eq:mLdiag} and \eqref{eq:dplusminus}. For
\(\lambda=0\), the minimizer is affine and
\[
m_L^0(a,b)=\frac{|a-b|^2}{L},
\]
which is exactly \eqref{eq:zero-d}.
\end{proof}

Let \(H\) be the non-negative self-adjoint operator in \(L^2(\R)\) associated
with the quadratic form
\begin{equation}\label{eq:hform}
\mathfrak h[\phi]
=
\int_\R\left(|\phi'(t)|^2+e^{2t}|\phi(t)|^2\right)dt.
\end{equation}
Its form domain consists of $\phi\in H^1(\R)$ such that $e^t\phi\in L^2(\R)$. Formally,
\[
H=-\frac{d^2}{dt^2}+e^{2t}.
\]
We shall only need below that \(\inf\sigma(H)=0\) and that zero is not an eigenvalue; these facts are proved in the sharpness argument below.

Let \(P_\lambda, \lambda\in[0,\infty),\) denote the spectral resolution of \(H\). 

\begin{lemma}[Trace-form formula]\label{lem:trace-formula}
Let \(L>0\). Let \(g=(g_0,g_1)\in \widetilde{\mathcal Q}_L\) and assume, in addition, that \(g_0,g_1\in L^2(\R)\). Then
\begin{equation}\label{eq:qLformula}
\widetilde\q_L(g)
=
\int_{[0,\infty)} d_L^+(\lambda)\,d\norm{P_\lambda g_+}_{L^2(\R)}^2
+
\int_{[0,\infty)} d_L^-(\lambda)\,d\norm{P_\lambda g_-}_{L^2(\R)}^2 \, ,
\end{equation}
where
\[
g_+ :=\frac{g_0+g_1}{\sqrt2},
\qquad
g_- :=\frac{g_0-g_1}{\sqrt2}.
\]
Conversely, if \(g_0,g_1\in L^2(\R)\) and the right side of \eqref{eq:qLformula} is finite, then \(g\in\widetilde{\mathcal Q}_L\).
\end{lemma}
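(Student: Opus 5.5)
The plan is to diagonalize the problem in the $t$-variable using the spectral theorem for $H$, and then to minimize separately for each spectral parameter $\lambda$ via Lemma~\ref{lem:angular}.

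\emph{Lower bound.} Take any $w\in\mathcal H_L$ with $\Gamma_L w=g$ and $\E_L(w)<\infty$. For almost every $\theta\in(0,L)$ the slice $w(\cdot,\theta)$ lies in the form domain of $H$, and
\[
\E_L(w)=\int_0^L\left(\mathfrak h[w(\cdot,\theta)]+\|\partial_\theta w(\cdot,\theta)\|_{L^2(\R)}^2\right)d\theta.
\]
This requires $w(\cdot,\theta)\in L^2(\R)$. On $t>0$ this follows from $e^t w\in L^2$. On $t<0$ I would write $w(t,\theta)$ as the boundary value $g_0$ plus $\int_0^\theta w_\theta$, and use $g_0\in L^2$ and $w_\theta\in L^2$. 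This is exactly where the extra hypothesis $g_j\in L^2$ is used.

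Next I would pass to a spectral representation $U:L^2(\R)\to L^2(\mu)$ in which $H$ becomes multiplication by $\lambda$ (a direct integral). Setting $\hat w(\lambda,\theta)=(Uw(\cdot,\theta))(\lambda)$, the energy becomes
\[
\E_L(w)=\int\int_0^L\left(|\partial_\theta\hat w|^2+\lambda|\hat w|^2\right)d\theta\,d\mu ,
\]
with boundary values $\hat g_0,\hat g_1$. Here one has to justify that $U$ commutes with $\partial_\theta$ and with taking traces. I would do this by viewing $\theta\mapsto w(\cdot,\theta)$ as an $H^1((0,L);L^2(\R))$ function, so that the trace at $\theta=0,L$ is its continuous value in $L^2(\R)$. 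Applying Lemma~\ref{lem:angular} pointwise in $\lambda$ then gives
\[
\E_L(w)\ge\int m_L^\lambda(\hat g_0,\hat g_1)\,d\mu,
\]
and by \eqref{eq:mLdiag} this equals the right side of \eqref{eq:qLformula}.

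\emph{Upper bound and converse.} I would define the candidate minimizer by functional calculus:
\[
w(\cdot,\theta)=\Phi_\theta^{(0)}(H)g_0+\Phi_\theta^{(1)}(H)g_1,
\qquad
\Phi^{(0)}_\theta(\lambda)=\frac{\sinh(\sqrt\lambda(L-\theta))}{\sinh(\sqrt\lambda L)},\quad
\Phi^{(1)}_\theta(\lambda)=\frac{\sinh(\sqrt\lambda\,\theta)}{\sinh(\sqrt\lambda L)},
\]
with the affine interpolation at $\lambda=0$. These functions are bounded by $1$, so $w(\cdot,\theta)\in L^2$. The spectral calculation above, read in reverse, shows $\E_L(w)=\int m_L^\lambda\,d\mu$, which is the right side of \eqref{eq:qLformula}. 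Whenever this is finite, $w$ lies in $\mathcal H_L$ and has trace $g$. That proves both the reverse inequality and the converse statement.

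\emph{Main obstacle.} The delicate point is regularity. In the construction I must show that $w$ is weakly differentiable in $(t,\theta)$ jointly, that $\mathfrak h[w(\cdot,\theta)]$ equals $\|w_t\|^2+\|e^tw\|^2$, and that the trace is attained in the $L^2_{\loc}$ sense. I would first do this for $g$ with spectral support in a compact subset of $(0,\infty)$, where everything is smooth in $\theta$ with values in the form domain. I would then approximate general $g$ by spectral truncations $g\mapsto \mathbf 1_{[\epsilon,N]}(H)g$ and pass to the limit using monotone convergence of the right side of \eqref{eq:qLformula} together with closedness of the form $\E_L$. The possible atom at $\lambda=0$ causes no difficulty, since $d_L^\pm$ is continuous there.
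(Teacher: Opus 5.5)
Your proposal is correct and follows essentially the same route as the paper: the slice estimate from $g_0\in L^2(\R)$ and $w_\theta\in L^2(S_L)$ places $w$ in $H^1((0,L);L^2(\R))$, then come the spectral representation of $H$, the fiberwise use of Lemma~\ref{lem:angular}, and the $\sinh$-interpolant built by functional calculus as the minimizer. The difference is mainly organizational: the paper first proves the formula abstractly on $H^1((0,L);\mathfrak H)\cap L^2((0,L);\mathcal D(A^{1/2}))$ and then identifies this space, for $A=H$, with the $L^2$-trace part of $\mathcal H_L$, so it reads off the regularity of the constructed minimizer from that identification; your spectral-truncation and form-closedness argument reaches the same conclusion.
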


\begin{proof}
\emph{Step 1.} We begin by proving the following abstract fact. Let \(A\ge0\) be a self-adjoint operator in a separable Hilbert space \(\mathfrak H\), with spectral resolution \(E_A(\lambda)\). For
\[
v \in \mathcal W_L(A)
:=
H^1((0,L);\mathfrak H)
\cap
L^2((0,L);\mathcal D(A^{1/2}))
\]
define
\[
\mathcal A_L(v)
:=
\int_0^L
\left(
\norm{v'(\theta)}_{\mathfrak H}^2
+
\norm{A^{1/2}v(\theta)}_{\mathfrak H}^2
\right)d\theta.
\]
Then, for \(u_0,u_1\in\mathfrak H\),
\begin{equation}\label{eq:abstract-trace-formula}
\begin{aligned}
&\inf_{\substack{v\in\mathcal W_L(A)\\ v(0)=u_0,\ v(L)=u_1}}
\mathcal A_L(v) \\
&\quad =
\int_{[0,\infty)}d_L^+(\lambda)\,
d\norm{E_A(\lambda)u_+}_{\mathfrak H}^2
+
\int_{[0,\infty)}d_L^-(\lambda)\,
d\norm{E_A(\lambda)u_-}_{\mathfrak H}^2,
\end{aligned}
\end{equation}
where \(u_\pm:=(u_0\pm u_1)/\sqrt2\). The equality is understood in the extended sense.

We prove \eqref{eq:abstract-trace-formula}. By the spectral theorem in the form that appears, for example, in \cite[Theorem 7.5.1]{BiSo}, \(A\) is unitarily equivalent to multiplication by the independent variable in a direct integral Hilbert space
\[
\mathfrak K=\int_{[0,\infty)}^\oplus \mathfrak K_\lambda\,d\mu(\lambda).
\]
Let \(v\in\mathcal W_L(A)\). Then \(v(\theta)\in\mathfrak H\) for every \(\theta\in[0,L]\), and we let \(V(\cdot,\theta)\in\mathfrak K\) denote the image of \(v(\theta)\) in the direct integral. We let \(U_0,U_1\) denote the images of \(u_0,u_1\). It follows that
\[
\mathcal A_L(v)
=
\int_{[0,\infty)} \int_0^L
\left(
|\partial_\theta V(\lambda,\theta)|_{\mathfrak K_\lambda}^2
+
\lambda |V(\lambda,\theta)|_{\mathfrak K_\lambda}^2
\right)d\theta\,d\mu(\lambda) \, .
\]
Here we used Tonelli's theorem. For \(\mu\)-a.e.~\(\lambda\in[0,\infty)\), we have \(V(\lambda,0)=U_0(\lambda)\) and \(V(\lambda,L)=U_1(\lambda)\). This follows from Fubini, since \(V,\partial_\theta V\in L^2\), so for \(\mu\)-a.e.~\(\lambda\) the function \(V(\lambda,\cdot)\) belongs to \(H^1((0,L);\mathfrak K_\lambda)\), with endpoint values induced by the \(H^1((0,L);\mathfrak H)\)-trace of \(v\).

The scalar formula in Lemma \ref{lem:angular} applies fiberwise also for \(\mathfrak K_\lambda\)-valued endpoints, since the minimizing path is obtained by multiplying the two endpoint vectors by the same scalar coefficient functions as in the scalar case. Applying Lemma \ref{lem:angular} in each fiber gives
\begin{align*}
& \int_0^L
\left(
|\partial_\theta V(\lambda,\theta)|_{\mathfrak K_\lambda}^2
+
\lambda |V(\lambda,\theta)|_{\mathfrak K_\lambda}^2
\right)d\theta \\
& \quad \ge
d_L^+(\lambda) \left|\frac{U_0(\lambda)+U_1(\lambda)}{\sqrt2}\right|_{\mathfrak K_\lambda}^2
+
d_L^-(\lambda)\left|\frac{U_0(\lambda)-U_1(\lambda)}{\sqrt2}\right|_{\mathfrak K_\lambda}^2.
\end{align*}
As
\[
\int_{[0,\infty)} d_L^\pm(\lambda) \left|\frac{U_0(\lambda)\pm U_1(\lambda)}{\sqrt2}\right|_{\mathfrak K_\lambda}^2 \,d\mu(\lambda) = 
\int_{[0,\infty)}d_L^\pm(\lambda)\,
d\norm{E_A(\lambda)u_\pm}_{\mathfrak H}^2 ,
\]
we arrive at the lower bound in \eqref{eq:abstract-trace-formula}.

For the upper bound, assume that the right side of \eqref{eq:abstract-trace-formula} is finite. In the same spectral representation define, for \(\lambda>0\),
\[
V(\lambda,\theta)
=
\frac{\sinh((L-\theta)\sqrt{\lambda})}{\sinh(L\sqrt{\lambda})}U_0(\lambda)
+
\frac{\sinh(\theta\sqrt{\lambda})}{\sinh(L\sqrt{\lambda})}U_1(\lambda),
\]
and for \(\lambda=0\),
\[
V(0,\theta)=\left(1-\frac{\theta}{L}\right)U_0(0)+\frac{\theta}{L}U_1(0) \, .
\]
The coefficients are bounded on \([0,L]\), so \(V\in L^2((0,L);\mathfrak K)\). Moreover, by Lemma \ref{lem:angular} and the assumed finiteness of the right side of \eqref{eq:abstract-trace-formula},
\[
\int_{[0,\infty)} \int_0^L
\left(
|\partial_\theta V(\lambda,\theta)|_{\mathfrak K_\lambda}^2
+
\lambda|V(\lambda,\theta)|_{\mathfrak K_\lambda}^2
\right)d\theta\,d\mu(\lambda)<\infty.
\]
Thus the inverse spectral transform of \(V\) belongs to \(\mathcal W_L(A)\), has boundary values \(u_0,u_1\), and attains the value on the right side of \eqref{eq:abstract-trace-formula}. This proves the abstract fact.

\medskip

\emph{Step 2.}
We now apply \eqref{eq:abstract-trace-formula} with \(\mathfrak H=L^2(\R)\) and \(A=H\). Let \(g_0,g_1\in L^2(\R)\), and let \(w\in\mathcal H_L\) be admissible with \(\Gamma_Lw=(g_0,g_1)\). Then, for a.e. \(t\),
\[
|w(t,\theta)|^2
\le
2|g_0(t)|^2+
2L\int_0^L |w_\theta(t,s)|^2\,ds.
\]
Integrating in \(t\) and \(\theta\), we obtain
\[
\int_{S_L}|w|^2\,dt\,d\theta
\le
2L\norm{g_0}_{L^2(\R)}^2
+
2L^2\int_{S_L}|w_\theta|^2\,dt\,d\theta<\infty.
\]
Thus \(w\in L^2(S_L)\). Since \(w_\theta\in L^2(S_L)\), this gives
\[
w\in H^1((0,L);L^2(\R)).
\]
Moreover, \(w_t,e^tw\in L^2(S_L)\), hence
\[
w\in L^2((0,L);\mathcal D(\mathfrak h))
=
L^2((0,L);\mathcal D(H^{1/2})).
\]
For such \(w\),
\[
\E_L(w)
=
\int_0^L
\left(
\norm{w_\theta(\cdot,\theta)}_{L^2(\R)}^2
+
\mathfrak h[w(\cdot,\theta)]
\right)d\theta.
\]
Conversely, every element of \(\mathcal W_L(H)\), considered as a function on \(S_L\), belongs to \(\mathcal H_L\) and has the same energy. Therefore, for \(L^2\)-boundary data, the minimization defining \(\widetilde\q_L\) coincides with the abstract Hilbert-space minimization problem for \(A=H\). Formula \eqref{eq:qLformula} follows from \eqref{eq:abstract-trace-formula}. The converse follows from the upper-bound construction above.
\end{proof}

Finally, we are in position to prove Lemma \ref{lem:comparison}. In view of \eqref{eq:traceenergiesequiv} it is an immediate consequence of the following result.

\begin{lemma}[Sharp comparison]\label{lem:comparison1}
Let \(\alpha,\beta>0\). Then $\widetilde{\mathcal Q}_\alpha = \widetilde{\mathcal Q}_\beta$ and for all $g$ from this set,
\begin{equation}\label{eq:comparison1}
\widetilde\q_\beta(g)
\le
\max\left\{\frac{\alpha}{\beta},\frac{\beta}{\alpha}\right\} \widetilde\q_\alpha(g) \,.
\end{equation}
Moreover,
\begin{equation}\label{eq:comparison-sharp1}
\sup_{g\ne0} \, \frac{\widetilde\q_\beta(g)}{\widetilde\q_\alpha(g)}
=
\max\left\{\frac{\alpha}{\beta},\frac{\beta}{\alpha}\right\}.
\end{equation}
\end{lemma}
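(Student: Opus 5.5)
We prove the comparison inequality by an elementary rescaling in the angular variable, and the sharpness via the trace-form formula of Lemma~\ref{lem:trace-formula}. By symmetry it suffices to treat, say, the general case with $C:=\max\{\alpha/\beta,\beta/\alpha\}$. Given $w\in\mathcal H_\alpha$, set $\tilde w(t,\theta):=w(t,\alpha\theta/\beta)$ on $S_\beta$. Then $\Gamma_\beta\tilde w=\Gamma_\alpha w$, and a change of variables gives
\[
\E_\beta(\tilde w)=\int_{S_\alpha}\Bigl(\tfrac{\beta}{\alpha}\bigl(|w_t|^2+e^{2t}|w|^2\bigr)+\tfrac{\alpha}{\beta}|w_\theta|^2\Bigr)\,dt\,d\theta\le C\,\E_\alpha(w).
\]
Hence $\tilde w\in\mathcal H_\beta$, which yields $\widetilde{\mathcal Q}_\alpha\subset\widetilde{\mathcal Q}_\beta$. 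Taking the infimum over $w$ gives \eqref{eq:comparison1}. Exchanging $\alpha$ and $\beta$ gives the reverse inclusion, so $\widetilde{\mathcal Q}_\alpha=\widetilde{\mathcal Q}_\beta$. This part is routine.

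For the sharpness \eqref{eq:comparison-sharp1} we restrict to $L^2$ data and use \eqref{eq:qLformula}. The ratios of the multipliers behave as follows. As $\lambda\to0$,
\[
d_\beta^+(\lambda)/d_\alpha^+(\lambda)\to \beta/\alpha,\qquad d_\beta^-(\lambda)/d_\alpha^-(\lambda)\to\alpha/\beta,
\]
since $\tanh x\sim x$ and $\coth x\sim 1/x$. As $\lambda\to\infty$, both ratios tend to $1$. So if $\beta>\alpha$ we take symmetric data $g=(h,h)$, so that $g_-=0$ and $g_+=\sqrt2 h$. If $\beta<\alpha$ we take antisymmetric data $g=(h,-h)$. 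In either case we need $h\in L^2(\R)$ whose spectral measure with respect to $H$ is concentrated in $[0,\delta]$, with $\delta\to0$. We take $h=P_\delta\phi$ for a suitable $\phi$. Then the right side of \eqref{eq:qLformula} for $\alpha$ is finite, because $d^\pm_\alpha$ is bounded on $[0,\delta]$. By the converse part of Lemma~\ref{lem:trace-formula}, $g\in\widetilde{\mathcal Q}_\alpha$. The quotient $\widetilde\q_\beta(g)/\widetilde\q_\alpha(g)$ is then a ratio of integrals over $[0,\delta]$ whose integrands have pointwise ratio within $o(1)$ of $C$. This forces the quotient to tend to $C$, provided the $\alpha$-energy is nonzero.

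That proviso is the main obstacle. We must show that $\inf\sigma(H)=0$, so that $P_\delta\neq0$ for every $\delta>0$, and that $0$ is not an eigenvalue, so that the relevant spectral mass avoids $\lambda=0$.

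There is a further subtlety in the symmetric case, where $d^+(0)=0$. The spectral mass of $h$ must sit in $(0,\delta]$; otherwise $\widetilde\q_\alpha(g)$ could vanish and the ratio would be undefined. The absence of a zero eigenvalue guarantees that $P_\delta\phi\neq0$ has no atom at $0$. Moreover $d^+_\alpha>0$ on $(0,\delta]$, so $\widetilde\q_\alpha(g)>0$.

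For $\inf\sigma(H)=0$, we plan a Weyl-type trial sequence. Take $\phi_n(t)=n^{-1/2}\chi((t+n^2)/n)$ with a smooth bump $\chi$ supported in $[-1,1]$. Its support lies where $e^{2t}$ is tiny, and $\|\phi_n'\|^2=O(n^{-2})$, so $\mathfrak h[\phi_n]\to0$ while $\|\phi_n\|=\|\chi\|$. By the spectral theorem this gives $P_\delta\neq0$ for every $\delta>0$, because otherwise $\mathfrak h[\phi]\ge\delta\|\phi\|^2$ for all $\phi$.

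For the absence of a zero eigenvalue, suppose $\mathfrak h[\phi]=0$. Then $\phi'=0$ and $e^t\phi=0$, so $\phi=0$. Since $H\ge0$, an eigenvalue $0$ is equivalent to $\mathfrak h[\phi]=0$, and we conclude that zero is not an eigenvalue. Finally, \eqref{eq:comparison-sharp1} follows: the supremum is at most $C$ by \eqref{eq:comparison1} and at least $C$ by the construction above.
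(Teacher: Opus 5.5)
Your proposal is correct and follows the paper's proof essentially step for step. The same angular rescaling $\theta\mapsto\alpha\theta/\beta$ gives the comparison inequality and $\widetilde{\mathcal Q}_\alpha=\widetilde{\mathcal Q}_\beta$, and sharpness comes from the trace-form formula applied to $g=(h,h)$ (if $\beta>\alpha$) or $g=(h,-h)$ (if $\alpha>\beta$), with $0\neq h$ in the range of the spectral projection of $H$ on $[0,\delta]$, using $\inf\sigma(H)=0$ and the absence of a zero eigenvalue. The only differences are cosmetic, e.g.\ your Weyl sequence $n^{-1/2}\chi((t+n^2)/n)$ in place of the paper's $R^{-1/2}\eta(t/R)$ supported in $(-2R,-R)$.
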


\begin{proof}
Put
\[
C_{\alpha,\beta}:=\max\left\{\frac{\alpha}{\beta},\frac{\beta}{\alpha}\right\}.
\]
We first prove the comparison inequality. Let \(g\in\widetilde{\mathcal Q}_\alpha\), and let \(w\in\mathcal H_\alpha\) satisfy \(\Gamma_\alpha w=g\). Define
\[
v(t,\theta):=w\left(t,\frac{\alpha}{\beta}\theta\right),
\qquad 0<\theta<\beta.
\]
Then \(\Gamma_\beta v=g\). Moreover, after the change of variables \(s=\alpha\theta/\beta\),
\[
\E_\beta(v)
=
\frac{\beta}{\alpha}
\iint_{S_\alpha}
\left(|w_t|^2+e^{2t}|w|^2\right)\,dt\,ds
+
\frac{\alpha}{\beta}
\iint_{S_\alpha}|w_\theta|^2\,dt\,ds.
\]
Hence
\[
\E_\beta(v)
\le
C_{\alpha,\beta}\E_\alpha(w).
\]
Taking the infimum over all such \(w\) gives
\[
\widetilde\q_\beta(g)
\le
C_{\alpha,\beta}\widetilde\q_\alpha(g).
\]
In particular, \(\widetilde{\mathcal Q}_\alpha\subset \widetilde{\mathcal Q}_\beta\). Interchanging \(\alpha\) and \(\beta\) gives the reverse inclusion, and therefore \(\widetilde{\mathcal Q}_\alpha=\widetilde{\mathcal Q}_\beta\).

It remains to prove sharpness, that is, \eqref{eq:comparison-sharp1}. For this part we use the \(L^2\)-trace formula from Lemma \ref{lem:trace-formula}. For \(\lambda>0\), write \(\tau=\sqrt\lambda\). By Lemma \ref{lem:angular},
\begin{equation}\label{eq:Rplus}
R_+(\lambda)
:=
\frac{d_\beta^+(\lambda)}{d_\alpha^+(\lambda)}
=
\frac{\tanh(\tau\beta/2)}{\tanh(\tau\alpha/2)}
\end{equation}
and
\begin{equation}\label{eq:Rminus}
R_-(\lambda)
:=
\frac{d_\beta^-(\lambda)}{d_\alpha^-(\lambda)}
=
\frac{\tanh(\tau\alpha/2)}{\tanh(\tau\beta/2)}
=
\frac1{R_+(\lambda)} \,.
\end{equation}

We recall that the bottom of the spectrum of
\[
H=-\frac{d^2}{dt^2}+e^{2t}
\]
is zero. Indeed, let \(0\ne\eta\in C_c^\infty((-2,-1))\) and put
\[
\eta_R(t)=R^{-1/2}\eta(t/R),
\qquad R>1.
\]
Then \(\eta_R\) is supported in \((-2R,-R)\), and
\[
\frac{\mathfrak h[\eta_R]}{\norm{\eta_R}_{L^2(\R)}^2}
\le
\frac{C}{R^2}+e^{-2R}\to0.
\]
Thus \(\inf\sigma(H)=0\). Moreover, zero is not an eigenvalue, since
\(\mathfrak h[\phi]=0\) would force \(\phi'=0\) and \(e^t\phi=0\), hence
\(\phi=0\). Therefore, for every \(\delta>0\), the spectral projection of \(H\)
on \([0,\delta]\) is nonzero and has no mass at \(0\). Here \(P(I)\) denotes the spectral projection of \(H\) corresponding to the Borel set \(I\).

If \(\beta>\alpha\), then
\[
\lim_{\lambda\downarrow0}R_+(\lambda)=\frac{\beta}{\alpha}.
\]
Given \(\eps>0\), choose \(\delta>0\) so small that
\[
R_+(\lambda)\ge \frac{\beta}{\alpha}-\eps
\qquad\text{for }0<\lambda\le\delta.
\]
Let \(0\ne h\in\operatorname{Ran} P([0,\delta])\) and choose
\[
g_0:=g_1:=h \,.
\]
By the converse part of Lemma \ref{lem:trace-formula}, this \(g\) belongs to both \(\widetilde{\mathcal Q}_\alpha\) and \(\widetilde{\mathcal Q}_\beta\). Since \(g_-=0\), the quotient \(\widetilde\q_\beta(g)/\widetilde\q_\alpha(g)\) is a weighted average of \(R_+(\lambda)\) over \((0,\delta]\). Hence
\[
\frac{\widetilde\q_\beta(g)}{\widetilde\q_\alpha(g)}
\ge
\frac{\beta}{\alpha}-\eps.
\]
Since \(\eps>0\) is arbitrary, this gives sharpness when \(\beta>\alpha\).

If \(\alpha>\beta\), then
\[
\lim_{\lambda\downarrow0}R_-(\lambda)=\frac{\alpha}{\beta}.
\]
The same argument with
\[
g_0:=h,
\qquad
g_1:=-h
\]
gives sharpness in this case. The case \(\alpha=\beta\) is immediate. This proves \eqref{eq:comparison-sharp1}.
\end{proof}

\begin{remark}
    Our proof yields the following generalization of Theorem \ref{main}. Let $0\leq w\in L^1_\loc((0,\infty))$ such that the point $0$ belongs to the spectrum of the operator
    $$
    -\frac{d^2}{dt^2} + e^{2t} w(e^t)
    \qquad\text{in}\ L^2(\R) \,,
    $$
    but is not an eigenvalue. Defining the (semi-)norm
    $$
    \norm{u}_{W^{1,2}_w(\Omega)}^2
=
\int_\Omega \left(|\nabla u|^2+w(|x|)|u|^2\right)dx \,,
    $$
    we find that the infimum of $\|E\|_{W^{1,2}_w(A_\alpha)\to W^{1,2}_w(\R^2)}$ over all extension operators is given by the right side of \eqref{eq:norm}. For $w\equiv 1$, this is Theorem \ref{main}, while for $w\equiv 0$, this is the assertion in \cite{Mazya2018}.
\end{remark}


\section*{Acknowledgments}

R.~L.~F.~acknowledges partial support from US NSF grant DMS-1954995 and the DFG grants EXC-2111-390814868 TRR 352-Project-ID 470903074, and FR 2664/3-1. P.~I.~acknowledges partial support from the US NSF CAREER grant DMS-2152401, US NSF grant DMS-2554183,  a Simons Fellowship, and a Humboldt Research Fellowship for Experienced Researchers. The authors acknowledge the use of AI tools during the exploratory stage of this project. All mathematical arguments and proofs in the final manuscript were checked and written by the authors.


\end{document}